\newcommand{\mathd}{\mathrm{d}}
\newcommand{\tmop}[1]{\ensuremath{\operatorname{#1}}}
\newcommand{\tmtextit}[1]{{\itshape{#1}}}
\definecolor{grey}{rgb}{0.75,0.75,0.75}
\definecolor{orange}{rgb}{1.0,0.5,0.5}
\definecolor{brown}{rgb}{0.5,0.25,0.0}
\definecolor{pink}{rgb}{1.0,0.5,0.5}
\newtheorem{corollary}{Corollary}
\newtheorem{lemma}{Lemma}
\newtheorem{proposition}{Proposition}
\newtheorem{theorem}{Theorem}
\newcommand{\ideal}[1]{\ensuremath{\langle #1 \rangle}}
\newcommand{\nats}{\mathbb{N}}
\newcommand{\ints}{\mathbb{Z}}
\newcommand{\reals}{\mathbb{R}}
\newcommand{\compls}{\mathbb{C}}
\newcommand{\MFn}{\tmop{MFn}}
\newcommand{\gothic}{\mathfrak}
\newcommand{\gm}{{\gothic m}}
\begin{document}

\title{Constructive Theory of Banach algebras}
\author{Thierry Coquand}
\address{Thierry Coquand\\
Computing Science Department, G\"oteborg University}
\author{Bas Spitters}
\address{Bas Spitters\\
Computer Science Department, Radboud University Nijmegen}
\subjclass[2000]{46Jxx Commutative Banach algebras and commutative topological
algebras; 06D22 Frames, locales; 03F60 Constructive and recursive
analysis}
\keywords{Banach algebra;
constructive mathematics; Gelfand-Mazur theorem; Wiener Tauberian theorem}
\date{\today}
\begin{abstract}
We present a way to organize a constructive development of the theory
of Banach algebras, inspired by works of Cohen, de Bruijn and Bishop
~\cite{Cohen,deBruijn,Bishop/Bridges:1985,Coquand/Stolzenberg}. We illustrate
this by giving elementary proofs of Wiener's result on the inverse
of Fourier series and Wiener's Tauberian Theorem, in a sequel to this paper
we show how this can be used in a localic, or point-free,
description of the spectrum of a Banach algebra.
\end{abstract}
\maketitle
\section*{Introduction}

The applications of the theory of Banach algebras to ``concrete'' theorems in
analysis,
such as Wiener's theorem on the inverse of Fourier series, or Wiener's Tauberian
Theorem~\cite{Wiener},
constitute a striking example of the power of abstract methods in mathematics.
The abstract argument is
short and easy to grasp, when compared to Wiener's explicit constructions.
Furthermore, it is
highly non constructive and uses Zorn's Lemma. As such, it is a perfect illustration of
Hilbert's defense of the use of the law of excluded-middle against Brouwer~\cite{Hilbert}. A natural
question is whether the abstract argument does not contain, in some implicit
way, an actual
construction.
This question has been analysed and answered by P. Cohen~\cite{Cohen}.
Closely related are later works by de Bruijn and van der Meiden~\cite{deBruijn},
and by Bishop and Bridges~\cite{Bishop/Bridges:1985}. Here we present a slightly different
analysis of the non-constructive argument, thus providing an elementary treatment.
Furthermore it allows us to reformulate some key results of Bishop and
Bridges~\cite{Bishop/Bridges:1985} in a localic, or point-free, setting. This will be done in a
sequel to the present paper.

This paper is organised as follows. The first section explains informally the
main idea of the paper. The next sections, until Section~\ref{Tauberian}, are presented in
elementary Bishop style mathematics, in a detailed way so that they can hopefully be
readily implemented in type theory using the framework presented in~\cite{Riemann}.
This presentation follows ideas presented in~\cite{Bridger/Stolzenberg}.

\subsection*{Notation}
We use the following conventions, unless otherwise indicated:\\
$A,B$ for Banach algebras.\\
$a,b,c,\ldots$ for elements of a Banach algebra.\\
$\varphi,\psi$ for functionals.\\
$\Gamma_R$ denotes the circle with radius $R$, $\Gamma$ is the circle with radius 1.\\

\section{Analysis of the abstract reasoning}

Let $A$ be a commutative Banach
algebra with unit and let $\MFn(A)$ be the compact space of
non-zero multiplicative
functionals $A\rightarrow\compls$. Let us assume that an element $f$ of $A$ satisfies
$\varphi(f)\neq 0$
for all $\varphi$ in $\MFn(A)$; and we want to show that $f$ is then invertible
in $A$. To obtain a contradiction, we assume that $f$ is {\em not} invertible. Then the
ideal $\ideal{f}$ is proper and hence, using
Zorn's Lemma, included in a maximal ideal $\gm$. This maximal ideal
is necessarily closed. Using the Gelfand-Mazur theorem, the algebra
$A/\gm$ is isomorphic to $\compls$ and the corresponding multiplicative
functional
$\varphi_{\gm}:A\rightarrow A/{\gm}$ is such that $\varphi_{\gm}(f)=0$. This
contradicts the hypothesis on $f$. For example, the application of this argument
to
$A=l^1(\ints)$ gives Wiener's theorem on inverse of Fourier series.

Cohen~\cite{Cohen} observes that, instead of working with a maximal ideal, one
can work just as well with the closure $I$ of ideal generated by $f$. Moreover,
an ideal contains $1$ if, and only if, its closure contains $1$. This
follows directly from the fact that $x$ is invertible if $|1-x|<1$. Finally, the
proof of the Gelfand-Mazur Theorem actually gives a more general result:
if $A$ is a non trivial Banach algebra, the spectrum of any element $u$
in $A$ is non empty. A combination of these remarks eliminates the use of Zorn's
Lemma.

To simplify, we consider the
case where $A$ is generated by one element $u$: for any $a$ in $A$ and any $r>0$ there is a
polynomial $P$ such that $| a - P(u) | < r$. This covers for instance the
example of the disc algebra~\cite{Cohen}.\label{p:one-gen}
In this case the spectrum $\MFn(A)$ can be identified with the spectrum,
$\sigma(u)$, of $u$: the set of complex $\lambda$ such that $\lambda-u$ is not
invertible. Any such element $\lambda$ defines a multiplicative functional
$\varphi(P(u)):= P(\lambda)$ on polynomials in $u$. If
$|P(\lambda)|>|P(u)|$, then $P(\lambda)-P(u)$ is invertible and hence
$\lambda-u$ is invertible. Since this is not the case, $|P(\lambda)|\leqslant
|P(u)|$, so
$\varphi$ can be extended to a multiplicative functional
$\varphi(g):=g(\lambda)$. Conversely,
for $\varphi$ in $\MFn(A)$, $\varphi(\varphi(u)-u)=\varphi(u)-\varphi(u)=0$ and
hence $\varphi(u)-u$ cannot be invertible, i.e.\ $\varphi(u)\in \sigma(u)$.
The hypothesis on $f$ becomes $f(\lambda)\neq 0$ for all $\lambda$ in the
spectrum of $u$.
The spectrum of $u$ is empty in $A/I$. Indeed, the closure of the ideal
generated by $\lambda-u$ contains $f(\lambda)-f = f(\lambda)$ modulo $I$ which
is invertible and hence $\lambda-u$ is always invertible
modulo $I$. Hence $1$ belongs to $I$ and hence $f$ is invertible. The maximal
ideal of the abstract argument has been replaced by the ``big enough'' ideal
$I$.
Essentially, this is the method followed by de Bruijn and van der
Meiden~\cite{deBruijn}, who
advocated a point-free approach to the description of the spectrum of a Banach
algebra.

In this way we eliminate the use of Zorn's Lemma, but the argument is still
non-constructive.
Cohen shows that we can follow
the proof of Gelfand-Mazur and produce an actual computation of the
inverse of $f$.
Bishop and Bridges' work is
similar~\cite{Bishop/Bridges:1985}.
Coquand and Stolzenberg~\cite{Coquand/Stolzenberg}
show that in most cases, we can obtain a relatively short and explicit formula
for the inverse.

In this paper we suggest a slightly different constructive argument.
It is enough to have a constructive proof of the following result.

\begin{theorem}If we have $u$ in $A$ such that for all $\lambda$ the element $\lambda- u$ is
invertible \emph{ and} furthermore the inverse is uniformly bounded, then $1=0$ in $A$.
\end{theorem}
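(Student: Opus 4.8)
The plan is to study the resolvent, the element-valued function $v(\lambda) \assign (\lambda - u)^{-1}$ that the hypothesis furnishes for every $\lambda \in \compls$, and to derive $1 = 0$ by a Cauchy-type argument over the circles $\Gamma_R$. The first step is the resolvent identity: multiplying out $(\lambda - u)$ and $(\mu - u)$ shows that for all $\lambda, \mu$,
\[ v(\lambda) - v(\mu) = (\mu - \lambda)\, v(\lambda)\, v(\mu). \]
Hence $\|v(\lambda) - v(\mu)\| \le |\lambda - \mu|\,\|v(\lambda)\|\,\|v(\mu)\|$, so $v$ is locally Lipschitz, and letting $\mu \to \lambda$ gives differentiability with $v'(\lambda) = -v(\lambda)^2$. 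Thus $v$ is an $A$-valued holomorphic function on all of $\compls$; locally it coincides with a convergent Neumann series, which makes its analyticity explicit and justifies integrating it along each $\Gamma_R$.

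Next I would exploit the Neumann expansion $v(\zeta) = \sum_{n \ge 0} u^n \zeta^{-n-1}$, valid for $|\zeta| > \|u\|$: integrating it term by term over $\Gamma_R$ with $R > \|u\|$ isolates the residue and gives $\frac{1}{2\pi i}\oint_{\Gamma_R} v(\zeta)\,\mathd\zeta = 1$, while the same series yields the decay $\|v(\lambda)\| \le (|\lambda| - \|u\|)^{-1} \to 0$ as $|\lambda| \to \infty$. The boundedness hypothesis enters decisively at the next step: writing $M$ for a uniform bound on $\|v\|$, the Cauchy estimate on a circle of radius $\rho$ about an arbitrary $\lambda$ gives $\|v'(\lambda)\| \le M/\rho$, so letting $\rho \to \infty$ forces $v'(\lambda) = 0$ for every $\lambda$, i.e.\ $v$ is constant. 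The decay just noted then pins this constant to $0$, whence $0 = v(\lambda)(\lambda - u) = 1$ (equivalently, $0 = \frac{1}{2\pi i}\oint_{\Gamma_R} v = 1$), which is the assertion $1 = 0$.

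The main obstacle I anticipate is constructive rather than algebraic: one must make sense of the vector-valued holomorphy of $v$ and of the Cauchy derivative estimate within Bishop-style analysis, where the contour integrals presuppose uniform continuity of the integrand on each $\Gamma_R$ and the bound on $v'$ presupposes control of $\|v(\zeta)\|$ on arbitrarily large circles. This is precisely the global regularity that the uniform boundedness hypothesis supplies and that is unavailable from pointwise invertibility alone. Carrying out the Cauchy estimate with explicit moduli of continuity and convergence, rather than invoking Liouville's theorem as a black box, is the step I expect to require the most care.
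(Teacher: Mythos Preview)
Your argument is sound, and you share with the paper the opening moves: the resolvent identity, differentiability with $v'=-v^2$, and the Neumann expansion on $\Gamma_R$ for $R>|u|$ giving $\frac{1}{2\pi i}\oint_{\Gamma_R}v=1$. The divergence is in the endgame. You pass through Liouville: Cauchy estimates from the uniform bound $M$ force $v'\equiv 0$, the decay at infinity pins $v\equiv 0$, and then $1=(\lambda-u)v(\lambda)=0$. The paper instead uses its Lemma~\ref{holom} directly: for any $A$-valued function differentiable on all of $\compls$ and any loop $\gamma$, one has $\oint_\gamma v=0$ (proved by the one-line homotopy $g(s)=\oint v(s\zeta)s\,d\zeta$ with $g'(s)=0$). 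Contrasting this with the Neumann computation $\oint_{\Gamma_R}v=2\pi i$ yields $1=0$ immediately, with the uniform bound used only to make $v$ Lipschitz so the integral is constructively defined.

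What this buys: your route needs the Cauchy estimate $\|v'(\lambda)\|\le M/\rho$, which in turn rests on the Cauchy integral formula for $v'$, which itself presupposes a Cauchy-theorem statement at least as strong as Lemma~\ref{holom}. So your proof implicitly contains the paper's key lemma and then layers Liouville on top. The authors explicitly remark in the introduction that they chose a path that \emph{does not rely on Cauchy's formula}, precisely to keep the constructive content elementary; your final paragraph correctly anticipates that carrying out the Cauchy estimate with explicit moduli is the delicate step, and the paper's route simply sidesteps it.
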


 Notice that, constructively, we have to state explicitly that the inverse is
uniformely bounded. At first the conclusion seems purely negative, since it says that a
ring is trivial, and not informative. But if we apply the Theorem to a quotient algebra $A/I$ the
conclusion shows that $1$ is in $I$. Choosing $I$ to be the closure of $\ideal{f}$ actually builds
an inverse of $f$, provided we work constructively.
Such use
of ``trivial rings'' occurs often in constructive algebra \cite{Richman,
Seminormal}.
In the applications to Wiener's Theorems on Fourier series and
on the
Tauberian Theorem, we believe that our treatment is simpler than the one in~
\cite{Bishop/Bridges:1985}.
Moreover, this theorem has a rather direct proof, for instance, it does
not rely on Cauchy's formula like the treatment
in~\cite{Cohen,Bishop/Bridges:1985,Coquand/Stolzenberg}.

However, one should notice that this constructive reading does not yet have the
``elegance'' of the classical proofs, since we have to deal explicitely with
a bound of the inverse. In the sequel to this paper, we explain how to ``hide''
explicit mentions to this bound, by using a localic, or pointfree, presentation of the
spectrum. In this way we obtain a rather faithful constructive explanation
of the classical arguments.

\section{Integration and differentiation with values in a Banach space}

We consider a vector space $E$ over the real numbers with an
upper real valued semi-norm: if $a$ is in $E$ then $|a|$ is an
\emph{upper real} (open nonempty upper set of rationals) such that
$|ra|= |r||a|$ and $|a + b| \leqslant |a|+|b|$. This is an important
difference with Bishop's treatment, for whom the norm is a Cauchy
real\footnote{The fact that the norm is not a Cauchy real constructively arises
if one wants to quotient a Banach space by a closed subspace which may not be
located. (This will indeed occur in the applications.)\\
Another motivation is the interpretation of these results in Banach algebra
bundles which we will discuss in Section~\ref{conclusion}. In a Banach algebra
bundle the function $x\mapsto |a_x|$ is \emph{upper} semi continuous, but
generally not continuous; see \cite{Banach-alg-bundle}.\\
Bishop and Bridges~\cite[p.462]{Bishop/Bridges:1985} state:`It would be
interesting, and probably nontrivial, to extend the theory to cover such
algebras [where the norm is not a Cauchy real].'
}.

The equality in $E$ is such that $|a|= 0$ iff $a = 0$ in $E$. If $f : [a, b]
\rightarrow E$ is a (uniformly) continuous map, we say that $f$ is
\emph{differentiable} iff there exists a continuous $f' : \reals
\rightarrow E$ such that for all $\varepsilon > 0$ there exists $\eta$ such that
$|f (y) - f (x) - (y - x) f' (x) | \leqslant \varepsilon |y - x|$ if $|y -
x|
\leqslant \eta$.

\begin{lemma}
  \label{main}If $|f' (x) | \leqslant M$ for all $x$ in $[a, b]$ then $|f
(b)
  - f (a) | \leqslant M (b - a)$. In particular, if $f' (x) = 0$ for all
$x$
  in $[a, b]$ then $f (b) = f (a)$.
\end{lemma}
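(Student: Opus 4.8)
The plan is to reduce the vector-valued statement to a chain of local Lipschitz estimates and then pass to the limit. First I would fix an arbitrary $\varepsilon > 0$ and invoke the (uniform) differentiability of $f$ to obtain a single $\eta > 0$, valid for all points of $[a,b]$ simultaneously, with $|f(y) - f(x) - (y-x)f'(x)| \leqslant \varepsilon|y-x|$ whenever $|y-x| \leqslant \eta$. Combining this with the hypothesis $|f'(x)| \leqslant M$ and the triangle inequality for the upper-real semi-norm yields the local estimate $|f(y) - f(x)| \leqslant (M+\varepsilon)|y-x|$ for all $x,y \in [a,b]$ with $|y-x| \leqslant \eta$.

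Next I would chain these local estimates. Choose $n$ large enough that $(b-a)/n \leqslant \eta$, and set $x_i = a + i(b-a)/n$ for $0 \leqslant i \leqslant n$. Writing $f(b) - f(a)$ as the telescoping sum $\sum_{i=0}^{n-1} (f(x_{i+1}) - f(x_i))$ and applying the triangle inequality together with the local estimate on each of the $n$ subintervals gives
\[
|f(b) - f(a)| \leqslant \sum_{i=0}^{n-1} |f(x_{i+1}) - f(x_i)| \leqslant (M+\varepsilon)\sum_{i=0}^{n-1}(x_{i+1}-x_i) = (M+\varepsilon)(b-a).
\]

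Finally, since this bound holds for every $\varepsilon > 0$, I would conclude $|f(b) - f(a)| \leqslant M(b-a)$: given any rational $q > M(b-a)$, one chooses $\varepsilon$ small enough that $(M+\varepsilon)(b-a) < q$, so that $q$ lies in the upper set defining $(M+\varepsilon)(b-a)$ and hence, by the displayed inequality, in the upper set of $|f(b)-f(a)|$; as $q$ was arbitrary this is exactly the desired estimate. The \emph{in particular} clause is then the special case $M = 0$, where the bound forces $|f(b)-f(a)| = 0$ and hence $f(b) = f(a)$ by the assumed property that $|a| = 0$ iff $a = 0$.

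The step that needs the most care is not the combinatorics of the telescoping sum but the interaction with the upper-real semi-norm. I must use that the $\eta$ produced by differentiability is genuinely uniform in $x$ (which is precisely how differentiability was stated), so that the local Lipschitz bound holds on every subinterval without re-choosing $\eta$, and I must check that the triangle and scalar-multiplication rules for upper reals license both the summation and the final passage $\varepsilon \to 0$. I expect the main obstacle to be exactly this bookkeeping with upper reals rather than any genuine analytic difficulty, since the uniform differentiability hypothesis is what makes the classical compactness argument unnecessary.
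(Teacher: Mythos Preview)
Your argument is correct and is essentially the same as the paper's: fix $\varepsilon>0$, use the uniform $\eta$ from differentiability to partition $[a,b]$ into short subintervals, combine the local estimate with $|f'(x)|\leqslant M$ and telescope to get $|f(b)-f(a)|\leqslant (M+\varepsilon)(b-a)$, then let $\varepsilon$ be arbitrary. Your extra care with the upper-real semi-norm is appropriate but does not change the route.
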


\begin{proof}
  For any $\varepsilon > 0$ we show $|f (b) - f (a) | \leqslant (M +
  \varepsilon) (b - a)$, by cutting $[a, b]$ in subintervals $[a_i, a_{i +
  1}]$ such that $|f (a_{i + 1}) - f (a_i) - (a_{i + 1} - a_i) f' (a_i) |
  \leqslant \varepsilon (a_{i + 1} - a_i)$.
\end{proof}

Let $E$ be a \tmtextit{Banach space}, i.e.\ a normed vector space which is complete: any
Cauchy approximation converges. If $g : [a, b] \rightarrow E$ is uniformly continuous we define
$\int_a^b g
(x) \mathd x$. If we write $G (y) = \int_a^y g (x) \mathd x$ then $G : [a, b]
\rightarrow E$ is differentiable  and $G' (y) = g (y)$.

\begin{corollary}
If $|g(x)|\leqslant M$ for all $x$ in $[a,b]$, then
$|\int_a^b g(x)\mathd x|\leqslant M(b-a)$.
\end{corollary}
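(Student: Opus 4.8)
The plan is to obtain this directly as a specialisation of Lemma~\ref{main}, applied to the antiderivative of $g$. First I would set $G(y) = \int_a^y g(x)\,\mathd x$, which by the discussion immediately preceding the statement is differentiable on $[a,b]$ with $G'(y) = g(y)$. The hypothesis $|g(x)| \leqslant M$ for all $x$ in $[a,b]$ then says exactly that $|G'(x)| = |g(x)| \leqslant M$ for all $x$ in $[a,b]$, so $G$ satisfies the bound required by Lemma~\ref{main}.

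Applying Lemma~\ref{main} to $f = G$ yields $|G(b) - G(a)| \leqslant M(b-a)$. It then remains only to identify the two endpoint values: by the convention $\int_a^a g(x)\,\mathd x = 0$ we have $G(a) = 0$, while $G(b) = \int_a^b g(x)\,\mathd x$ by definition. Hence $|G(b) - G(a)| = |\int_a^b g(x)\,\mathd x|$, and the displayed inequality is precisely the conclusion of the lemma.

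I do not expect any genuine obstacle, since the corollary is essentially Lemma~\ref{main} read through the fundamental-theorem-of-calculus statement for $G$. The only points deserving a moment's care are constructive bookkeeping: that $|G'(x)| = |g(x)| \leqslant M$ is understood as an inequality between the \emph{upper real} $|g(x)|$ and $M$, matching the hypothesis format of the lemma, and that $G(a) = 0$ in $E$ (equivalently $|G(a)| = 0$) follows from the empty-interval convention for the integral. Both are immediate from the definitions, so the entire argument reduces to a single invocation of the mean-value bound already in hand.
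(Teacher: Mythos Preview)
Your argument is correct and is exactly what the paper intends: the Corollary is stated without proof, as an immediate consequence of Lemma~\ref{main} applied to the antiderivative $G(y)=\int_a^y g(x)\,\mathd x$ introduced just before. Your bookkeeping about the upper-real semi-norm and the empty-interval convention is fine and matches the surrounding definitions.
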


\section{Exponential function}

A (commutative) \tmtextit{Banach algebra} $A$ is a Banach space with a ring
structure, multiplication $ab$ being such that $|ab| \leqslant |a||b|$.
In this section, we assume also that $A$ has a unit element $1$.
\begin{lemma}
  \label{inv}If $|1 - x| < 1$, then $x$ is invertible with inverse
  $\sum (1 - x)^n$.
\end{lemma}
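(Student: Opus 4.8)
The plan is to show that the geometric series $\sum_{n=0}^\infty (1-x)^n$ converges in $A$ and that its sum is a two-sided inverse of $x$. Write $y \assign 1-x$, so that $|y| < 1$ by hypothesis, and consider the partial sums $s_N \assign \sum_{n=0}^N y^n$. First I would establish convergence: using the submultiplicativity of the norm, $|y^n| \leqslant |y|^n$, and since $|y|$ is an upper real strictly below $1$, I can fix a rational $q$ with $|y| < q < 1$, so that $|y^n| \leqslant q^n$. The tail $|s_{N+k} - s_N| \leqslant \sum_{n=N+1}^{N+k} |y|^n \leqslant \sum_{n > N} q^n = q^{N+1}/(1-q)$ shows that $(s_N)$ is a Cauchy approximation, hence converges to some $s \in A$ by completeness of the Banach space $A$.

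Next I would verify that $s$ is the inverse of $x$. The key algebraic identity is the telescoping relation $(1-y)\, s_N = (1 - y) \sum_{n=0}^N y^n = 1 - y^{N+1}$, and since $x = 1-y$ this reads $x\, s_N = 1 - y^{N+1}$. Because $|y^{N+1}| \leqslant q^{N+1} \to 0$, the right-hand side converges to $1$. Using continuity of multiplication, which follows from $|ab| \leqslant |a||b|$ (so $|x s_N - x s| = |x(s_N - s)| \leqslant |x|\,|s_N - s| \to 0$), I conclude $x s = 1$. By commutativity of $A$ the same product gives $s x = 1$, so $s = \sum (1-x)^n$ is a genuine two-sided inverse and $x$ is invertible.

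The only point demanding care, and the main obstacle in this constructive setting, is the passage from the hypothesis $|1 - x| < 1$ to a usable rational bound $q$. Since the norm here is an \emph{upper} real rather than a Cauchy real, the statement $|y| < 1$ should be read as: there exists a rational $q$ with $q < 1$ that lies in the upper cut $|y|$, i.e.\ $|y| \leqslant q < 1$ in the appropriate sense. Extracting such a $q$ is precisely what makes the geometric estimate $|y^n| \leqslant q^n$ available and what turns the informal convergence argument into a constructively valid Cauchy approximation. Once $q$ is in hand, the remaining estimates are the routine geometric-series bounds above, and the telescoping identity does the rest.
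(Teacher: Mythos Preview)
Your proof is correct and follows exactly the same route as the paper: set $y=1-x$ and recognise $\sum y^n$ as the inverse of $1-y=x$ via the geometric series. The paper's own proof is a two-line sketch that simply asserts this; you have supplied the details (Cauchy estimate from a rational bound $q<1$, telescoping identity, continuity of multiplication) that the paper leaves implicit, including the care needed because the norm is only an upper real.
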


\begin{proof}
  We write $y = 1 - x$. We have $x = 1 - y$ and $\sum y^n$ is the inverse of
  $1 - y$ since $|y| < 1$.
\end{proof}

We define the exponential
$$e^a = \sum \frac{a^n}{n!}$$
of an element $a$ of $A$. The map $x \mapsto e^{ax}$ defines a
function $e_a : \reals \rightarrow A$ such that $e_a$ is differentiable, $e_a'
(x) = ae_a (x)$ and
$e_a (0) = 1$. Furthermore, we have $e_a (x + y) = e_a (x) e_a (y)$.

\begin{lemma}
  \label{exp}Let $f : \reals \rightarrow A$ be a continuous function such that
$f(0) = 1$ and $f (x + y) = f (x) f (y)$. Then $f$ is differentiable and $f =
e_a$
with $a = f' (0)$.
\end{lemma}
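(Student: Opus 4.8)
The plan is to use integration to bootstrap the regularity of $f$ from mere continuity to differentiability, and then to recognise $f$ as the unique solution of the differential equation $g' = a g$ with $g(0) = 1$. First I would set $F(y) = \int_0^y f(t)\mathd t$, which by the property of the integral stated above is differentiable with $F'(y) = f(y)$.

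The key step is to produce an invertible ``average'' of $f$ near $0$. By continuity of $f$ at $0$, I can choose $\delta > 0$ such that $|f(t) - 1| \leqslant \tfrac{1}{2}$ for all $t \in [0,\delta]$. Writing $c := \int_0^\delta f(t)\mathd t$, the Corollary above applied to $t \mapsto f(t) - 1$ gives $|c - \delta| = |\int_0^\delta (f(t) - 1)\mathd t| \leqslant \tfrac{\delta}{2}$, hence $|1 - \tfrac{1}{\delta}c| \leqslant \tfrac{1}{2} < 1$, so $\tfrac{1}{\delta}c$, and therefore $c$, is invertible by Lemma \ref{inv}. Using the functional equation together with translation invariance of the integral I then compute, for every $x$,
\[
f(x)\,c = \int_0^\delta f(x)f(t)\mathd t = \int_0^\delta f(x+t)\mathd t = \int_x^{x+\delta} f(s)\mathd s = F(x+\delta) - F(x),
\]
so that $f(x) = \bigl(F(x+\delta) - F(x)\bigr)c^{-1}$. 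The right-hand side is a difference of translates of the differentiable map $F$ multiplied by a constant, hence $f$ is differentiable, with $f'(x) = \bigl(f(x+\delta) - f(x)\bigr)c^{-1} = f(x)\bigl(f(\delta) - 1\bigr)c^{-1}$. Setting $a := \bigl(f(\delta) - 1\bigr)c^{-1}$ and evaluating at $x = 0$ gives $a = f'(0)$ and the clean identity $f'(x) = a f(x)$, where I use commutativity of $A$.

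It remains to identify $f$ with $e_a$. I would consider $g(x) = e_a(-x)f(x)$, which satisfies $g(0) = 1$; by the product rule, together with $e_a'(x) = a e_a(x)$ and commutativity, I expect $g'(x) = -a e_a(-x)f(x) + e_a(-x)a f(x) = 0$ for all $x$. By the second part of Lemma \ref{main}, $g$ is then constant equal to $g(0) = 1$, and multiplying $e_a(-x)f(x) = 1$ by $e_a(x)$ (noting $e_a(x)e_a(-x) = e_a(0) = 1$) yields $f(x) = e_a(x)$, as required.

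The main obstacle I anticipate is the constructive selection of the invertible $c$: one must extract an explicit $\delta$ from a modulus of continuity of $f$ at $0$ rather than appealing to a limit, and one must justify the translation invariance $\int_0^\delta f(x+t)\mathd t = \int_x^{x+\delta} f(s)\mathd s$ for the Banach-space-valued integral from its defining properties. These points are routine once the basic lemmas for the integral are in place, but they are precisely where this argument departs from the classical ``assume $f$ smooth and differentiate the functional equation''.
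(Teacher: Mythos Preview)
Your proposal is correct and follows essentially the same approach as the paper: both obtain an invertible average $c = \int_0^\delta f$ (the paper normalises it as $v = c/\delta$), use the functional equation to write $f(x)c = F(x+\delta)-F(x)$ and thereby differentiate $f$, and then conclude via $g(x)=e_a(-x)f(x)$ and Lemma~\ref{main}. Your extraction of $a=(f(\delta)-1)c^{-1}$ directly from $f(x+\delta)=f(x)f(\delta)$ is a minor streamlining of the paper's equivalent step, but the argument is otherwise the same.
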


\begin{proof}
  We have $1 / t \int_0^t f (x) \mathd x \rightarrow 1$ if $t \rightarrow 0$. So
we
  can find $t > 0$ such that $|1 - 1 / t \int_0^t f (z) dz| < 1$. The element
  $v = 1 / t \int_0^t f (z) dz$ is then invertible by Lemma~\ref{inv}. On the
  other hand we have, using $f (x + y) = f (x) f (y)$
  \[ tf (x) v = \int_0^t f (x + z) dz = \int_x^{x + t} f (z) dz \]
  and hence
  \[ tf (x) = v^{- 1} \int_x^{x + t} f (z) d (z) = v^{- 1} ( \int_0^{t + x} f
     (z) dz - \int_0^x f (x) \mathd x). \]
  It follows that $f$ is differentiable  and
  \[ tf' (x) = v^{- 1} (f (x + t) - f (x)). \]
  Since $f (x + y) = f (x) f (y)$, we have $f' (x + y) = f (x)
  f' (y)$. In particular, if we write $a = f' (0)$ we have $f' (x) = af (x)$.
  If we write $g (x) = f (x) e_a (- x)$ we have $g (0) = 1$ and $g' (x) = af
  (x) e_a (- x) - af (x) e_a (- x) = 0$. Hence by Lemma~\ref{main} we have $g
  (x) = 1$ for all $x$ and so $f (x) = e_a (x)$.
\end{proof}

\section{Path integration}

Let $E$ be a Banach space over of the complex numbers. We say that $f :
\compls \rightarrow E$ is \emph{differentiable} iff there exists a (uniformly)
continuous function $f' : \compls \rightarrow E$ such that for all
$\varepsilon > 0$ there exists $\eta > 0$ such that
\[ |f (z') - f (z) - (z' - z) f' (z) | \leqslant \varepsilon |z' - z| \]
whenever $|z' - z| \leqslant \eta$.

If $\gamma : [0, 1] \rightarrow E$ is a differentiable  function and $f :
\compls \rightarrow E$ is a continuous function, we define $\int_{\gamma}
f (z) dz$ to be $\int_0^1 f (\gamma (t)) \gamma' (t) dt$. We say that $\gamma$
is a \tmtextit{loop} iff $\gamma (0) = \gamma (1)$.

\begin{lemma}
  \label{holom}If $f : \compls \rightarrow E$ is differentiable  and $\gamma :
  [0, 1] \rightarrow \compls$ is a loop then $\int_{\gamma} f (z) dz = 0$
\end{lemma}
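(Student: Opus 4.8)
The plan is to exploit that the domain $\compls$ is simply connected: I would contract the loop to a point and show the integral is unchanged along the contraction. Writing $p=\gamma(0)=\gamma(1)$, introduce the affine homotopy
\[ H(s,t) = (1-s)\,p + s\,\gamma(t), \qquad (s,t)\in[0,1]\times[0,1], \]
so that $t\mapsto H(0,t)\equiv p$ is the constant loop and $H(1,t)=\gamma(t)$ recovers $\gamma$; note that $H(s,0)=H(s,1)=p$, so $t\mapsto H(s,t)$ is a loop for every $s$. I then study the one–parameter family of path integrals
\[ I(s) = \int_0^1 f\bigl(H(s,t)\bigr)\,\partial_t H(s,t)\,dt = \int_0^1 f\bigl(H(s,t)\bigr)\, s\,\gamma'(t)\,dt, \]
whose endpoints are $I(1)=\int_{\gamma} f(z)\,dz$ and $I(0)=0$, the latter because $\partial_t H(0,t)=0$. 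It therefore suffices to prove that $I$ is constant.

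The key computation is $I'(s)=0$ for all $s$. Differentiating under the integral sign (justified below) and applying the chain rule to the complex–differentiable $f$ composed with the affine map $s\mapsto H(s,t)$, I would write the integrand of $I'(s)$ as
\[ \partial_s\bigl[\,f(H)\,\partial_t H\,\bigr] = f'(H)\,(\partial_s H)(\partial_t H) + f(H)\,\partial_s\partial_t H. \]
Here $\partial_s H=\gamma(t)-p$ and $\partial_t H=s\gamma'(t)$ are complex scalars, so their order is immaterial, and the mixed partials coincide, $\partial_s\partial_t H=\gamma'(t)=\partial_t\partial_s H$; hence this expression is exactly $\partial_t\bigl[\,f(H)\,\partial_s H\,\bigr]$. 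By the fundamental theorem of calculus in $t$ (which in this setting follows from Lemma~\ref{main}) the integral collapses to the boundary term
\[ I'(s) = f\bigl(H(s,1)\bigr)(\gamma(1)-p) - f\bigl(H(s,0)\bigr)(\gamma(0)-p), \]
which vanishes precisely because $\gamma(0)=\gamma(1)=p$ --- this is where the loop hypothesis is used. With $I'\equiv 0$ in hand, Lemma~\ref{main} applied with $M=0$ gives $I(1)=I(0)$, so that $\int_\gamma f(z)\,dz = I(1)=I(0)=0$.

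The main obstacle I expect is the analytic bookkeeping behind the differentiation under the integral sign and the chain rule, which must be carried out constructively rather than merely invoked. Both reduce to uniform estimates on the compact square $[0,1]^2$: complex differentiability of $f$ supplies a uniformly continuous derivative $f'$ together with, for each $\varepsilon$, a single modulus $\eta$ controlling $|f(z')-f(z)-(z'-z)f'(z)|\leqslant\varepsilon|z'-z|$, while $\gamma$ and $\gamma'$ are bounded on $[0,1]$. Feeding these into the integral and bounding the error terms by the integral inequality of the Corollary to Lemma~\ref{main} justifies both the interchange and the identity for $I'(s)$. Notably, no appeal to Cauchy's formula is required, in keeping with the elementary spirit advertised in the paper.
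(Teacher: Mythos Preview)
Your argument is correct and follows essentially the same route as the paper: define a one-parameter family of loop integrals via an affine contraction, differentiate under the integral sign, recognise the integrand as a $t$-derivative, and conclude via Lemma~\ref{main}. The only cosmetic difference is that the paper contracts the loop to the origin (taking $g(s)=\int_0^1 f(s\gamma(t))\,s\gamma'(t)\,dt$ and $h(t)=\gamma(t)f(s\gamma(t))$) rather than to the basepoint $p=\gamma(0)$; in both cases the boundary term vanishes because $\gamma(0)=\gamma(1)$.
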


\begin{proof}
  We consider $g : [0, 1] \rightarrow E$ defined by
  \[ g (s) = \int_0^1 f (s \gamma (t)) s \gamma' (t) dt.\]
Then $g(1) = \int f$ and $g(0) = 0$.
  It is direct to see that $g$ is differentiable  and that
  \[ g' (s) = \int_0^1 (f (s \gamma (t)) + s \gamma (t) f' (s \gamma (t)))
     \gamma' (t) dt = \int_0^1 h' (t) dt = h (1) - h (0) = 0 \]
  where $h (t) = \gamma (t) f (s \gamma (t))$. By Lemma~\ref{main}, we have
$g(1) = g(0) =0$, hence the result.
\end{proof}

\section{Inverse function}

Let $A$ be a Banach algebra over the complex numbers with a unit element.

\begin{lemma}\label{open}
If $a$ is invertible, with inverse $b$ such that
$|b| \leqslant M$, then $a - u$ is invertible if $|u| \leqslant c / M$ and $c<1$. It inverse is
bounded by
$M/(1-c)$.
\end{lemma}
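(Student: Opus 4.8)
The plan is to reduce to the geometric-series result of Lemma~\ref{inv} by factoring out the invertible element $a$. First I would write
\[ a - u = a(1 - bu), \]
using that $b = a^{-1}$, so that $a - u$ is a product of $a$ (known invertible) with the factor $1 - bu$. Since a product of invertible elements is invertible, it suffices to invert $1 - bu$ and then assemble the inverse of $a-u$ from it.

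Next I would estimate the size of the perturbation. By sub-multiplicativity of the norm together with the hypothesis $|u| \leqslant c/M$,
\[ |bu| \leqslant |b|\,|u| \leqslant M \cdot \frac{c}{M} = c < 1. \]
Hence $|1 - (1 - bu)| = |bu| \leqslant c < 1$, and Lemma~\ref{inv} applies directly to $x = 1 - bu$: the element $1 - bu$ is invertible, with inverse given by the geometric series $\sum (bu)^n$.

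To control the norm, I would bound the series termwise, $|\sum (bu)^n| \leqslant \sum |bu|^n \leqslant \sum c^n = 1/(1-c)$. The inverse of $a - u = a(1-bu)$ is then $(1 - bu)^{-1} b$, and
\[ |(1 - bu)^{-1} b| \leqslant |(1-bu)^{-1}|\,|b| \leqslant \frac{1}{1-c}\cdot M = \frac{M}{1-c}, \]
which is exactly the claimed bound on the inverse.

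The argument is essentially routine once the factorization $a-u = a(1-bu)$ is spotted; there is no real obstacle, only bookkeeping. The one point deserving care is that, since the norm here is an upper real rather than a Cauchy real, all the displayed inequalities must be read as inequalities of upper reals and the termwise estimate of the geometric series justified in that sense --- but this is precisely the setting in which Lemma~\ref{inv} was already proved, so invoking it introduces no new difficulty.
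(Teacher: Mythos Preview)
Your proof is correct and matches the paper's own argument essentially verbatim: the paper factors $a-u=(1-ub)a$, bounds $|ub|\leqslant c$, invokes Lemma~\ref{inv}, and reads off the bound $M/(1-c)$ for the inverse. The only cosmetic difference is the side of the factorization (the paper writes $1-ub$ where you write $1-bu$), which is immaterial since the algebra is commutative.
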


\begin{proof}
  The element $a - u$ is invertible iff the element $1 - ub$ is. Since $|u| < 1
/ M$ and $|b| < M$, $|ub| \leqslant c$ and hence $1 - ub$ is invertible by
Lemma~\ref{inv}. The inverse is bounded by $(1 - c)^{-1}$. Hence the inverse of $a-u$ is bounded
by $M/(1-c)$.
\end{proof}

It follows from this lemma that the set of invertible elements $A^{\times}$ of
$A$ is an open subset of $A$.


\begin{theorem}\label{mainC}
 Let $u$ be in $A$. If for all $z$ in $\compls$ the element $z -
u$ is
  invertible, with inverse $f (z)$, \tmtextit{and} $f$ is uniformly bounded,
  then $1 = 0$ in $A$.
\end{theorem}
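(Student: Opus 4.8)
The plan is to prove this by adapting the classical Liouville-style argument to the Banach-algebra-valued setting, using the path-integration machinery already established. The key observation is that the hypotheses make $f(z) = (z-u)^{-1}$ into a \emph{differentiable} $A$-valued function on all of $\compls$ that is moreover uniformly bounded, say $|f(z)| \leqslant M$ for all $z$. First I would verify that $f$ is differentiable in the sense of the previous section, with derivative $f'(z) = -f(z)^2$; this is the standard resolvent identity, and it follows from Lemma~\ref{open}, which shows that inversion is continuous and lets one estimate the difference quotient $\bigl(f(z') - f(z)\bigr)/(z'-z)$ via the algebraic identity $f(z')-f(z) = f(z')\bigl((z-u)-(z'-u)\bigr)f(z) = -(z'-z)f(z')f(z)$.

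Next I would integrate $f$ around a large circle $\Gamma_R$ of radius $R$. Since $f$ is differentiable on all of $\compls$, Lemma~\ref{holom} applies and gives $\int_{\Gamma_R} f(z)\,dz = 0$ for every $R$. The aim is to contradict this by showing that a suitably normalized version of the integral is forced to be close to $1$, not $0$. The cleanest route is to compute $\frac{1}{2\pi i}\int_{\Gamma_R} f(z)\,dz$ and compare it to $1$: writing $z-u = z(1 - u/z)$ and expanding $f(z) = \frac{1}{z}\sum_{n\geqslant 0} (u/z)^n$ for $|z| = R$ with $R > |u|$, term-by-term path integration over the circle picks out exactly the $n=0$ term, yielding the value $1$. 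Thus one obtains both $\int_{\Gamma_R} f = 0$ (from holomorphy) and $\frac{1}{2\pi i}\int_{\Gamma_R} f = 1$ (from the expansion), forcing $1 = 0$ in $A$.

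The main obstacle, and the place where the uniform bound and the constructive setting really matter, is making the series expansion argument rigorous without appealing to Cauchy's formula as a black box. I expect the honest route is to avoid the expansion at a single $R$ and instead use the bound directly: the uniform estimate $|f(z)| \leqslant M$ together with the corollary bounding path integrals shows $\bigl|\int_{\Gamma_R} f(z)\,dz\bigr| \leqslant M \cdot 2\pi R$, which by itself is useless, so one must integrate a modified function. A better plan is to consider $g(z) = f(z) - \frac{1}{z-u_0}$-type comparisons, or more simply to integrate $z \mapsto f(z)$ against the difference of resolvents to isolate a term that decays like $R^{-2}$; then Lemma~\ref{holom} gives $0$ while the explicit leading term survives as $R\to\infty$. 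Care is needed because constructively the bound $M$ must be supplied explicitly, and all estimates must be uniform in $R$; the delicate step is ensuring the remainder genuinely tends to $0$ as $R$ grows while the principal term stays equal to $1$, so that the limit comparison yields the equality $1=0$ exactly rather than merely approximately.
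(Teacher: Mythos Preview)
Your initial plan is exactly the paper's proof: the resolvent identity gives $f'(z)=-f(z)^2$, Lemma~\ref{holom} yields $\int_{\Gamma_R} f = 0$, and the geometric expansion $f(z)=\sum_{n\geqslant 0} u^n/z^{n+1}$ on $\Gamma_R$ for $R>|u|$ gives $\int_{\Gamma_R} f = \int_{\Gamma_R} dz/z = 2\pi i$, forcing $1=0$.

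Your final paragraph of hedging is unnecessary, and the alternative routes you sketch there are detours. The series argument is already rigorous and elementary: for $R>|u|$ the expansion $f(z)=z^{-1}\sum (u/z)^n$ converges uniformly on $\Gamma_R$ (geometric with ratio $|u|/R<1$, by Lemma~\ref{inv}), so term-by-term integration is legitimate; $\int_{\Gamma_R} z^{-n-1}\,dz=0$ for $n\geqslant 1$ since $-z^{-n}/n$ is an antiderivative on the circle, while $\int_{\Gamma_R} z^{-1}\,dz = 2\pi i$ by direct parametrisation. No Cauchy formula, no comparison function $g$, and no passage to a limit in $R$ is needed. The place where the uniform bound $M$ actually enters is not the integral computation but the hypothesis of Lemma~\ref{holom}: from $f(z')-f(z)=-(z'-z)f(z')f(z)$ and $|f|\leqslant M$ one sees that $f$ is Lipschitz with constant $M^2$, hence $f'=-f^2$ is (uniformly) continuous, which is precisely what the paper's definition of ``differentiable'' demands.
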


Notice that classically, the hypothesis that $f$ is bounded is not
needed\footnote{Classically, this theorem implies the Gelfand-Mazur
Theorem: a
Banach algebra $A$ which is also a field is isomorphic to $\compls$.
Indeed, from Theorem~\ref{mainC}, for each $u$ in $A$, there exists $\lambda$
such that
$u-\lambda$ is not invertible and this implies $u=\lambda$.}.

\begin{proof}
  We have $f (z') - f (z) = (z - z') f (z) f (z')$ and hence $f$ is Lipschitz
  continuous and differentiable with $f' (z) = - f (z)^2$. We consider $\gamma :
  [0, 1] \rightarrow \compls$ to be the circle loop $\Gamma_R (t) = Re^{2
\pi it}$
  for some $R>|u|$. By Lemma~\ref{holom} we have
  \[ \int_{\Gamma_R} f (z) dz = 0.\]
  On the other hand, for $R$ big enough $f (z)$ is equal to $\sum u^n / z^{n
  + 1}$ over $\Gamma_R$ and so $\int_{\Gamma_R} f (z) dz$ is equal to
  $\int_{\Gamma_R} dz / z = 2 \pi i$. So, we have $1 = 0$ in $A$.
\end{proof}

Before giving the applications we remark that if $I$ is an ideal of a Banach
algebra $A$, we can define a new Banach algebra $A / I$ by taking the new norm
to be: $|a|_I < r$ iff there exists $b$ in $I$ such that $|a - b| < r$. Then
$a = 0$ in $A / I$ iff $a$ belongs to the closure of $I$. It follows
from Lemma~\ref{inv} that $1 = 0$ in $A / I$ iff $1$ belongs to $I$.

We deduce the following method from the previous theorem: To prove that an
element $g$ of $A$ is invertible, it is enough to find $u$ in $A$ such that for all $z$ in $\compls$
the element $z - u$ is invertible of bounded inverse  in $A / \ideal{g}$.

We emphasize that classically the inverse is always uniformly bounded. Constructively, there is a
meta-theorem, the fan rule, that allows us to always find a bound in concrete case; see
e.g.~\cite{Troelstra/vanDalen:1988}. In the examples below we make an effort to be explicit about
the bound.

\section{Application 1: Wiener's Theorem on Fourier series}

We present Wiener's theorem on Fourier series; see e.g.~\cite{Loomis}. The Banach algebra $B =
l^1(\ints)$ is the completion of the algebra of sequences in $\compls^{\ints}$ of finite support
with the convolution product $(a*b)_n = \sum a_ib_{n-i}$ and the norm $|a| = \sum |a_n|$. This
algebra has unit $\delta_0$. We write $u$ for $\delta_1$. Then $u^{-1}=\delta_{-1}$ and for every
$a$,
$a = \sum a_n u^n$ and $B$ is simply an algebra of infinite series under formal multiplication.

 If $\lambda$ is in $\Gamma$, i.e.\ if $|\lambda| = 1$, then we define
$a(\lambda) = \sum a_n\lambda^n$. We have $|a(\lambda)|\leqslant |a|$.

\begin{theorem}
If $f$ is an element of $B$ such that $|f(\lambda)|\geqslant\varepsilon$
for all $\lambda$ in $\Gamma$, then $f$ is invertible.
\end{theorem}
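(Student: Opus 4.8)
The plan is to apply the method distilled from Theorem~\ref{mainC}: to prove $f$ invertible it suffices to exhibit an element $u$ of $B$ such that $z-u$ is invertible in the quotient $B/\ideal{f}$, with inverse uniformly bounded, for every $z$ in $\compls$. The natural choice is $u=\delta_1$, whose inverse is $\delta_{-1}$ and whose ``spectrum'' is exactly $\Gamma$; indeed the multiplicative functionals of $B$ are the evaluations $a\mapsto a(\lambda)$ for $\lambda\in\Gamma$, so the hypothesis $|f(\lambda)|\geqslant\varepsilon$ says precisely that $f$ does not vanish on the spectrum of $u$. I would fix a small $\delta>0$ (to be determined) and treat three ranges of $z$ separately, seeking a uniform bound on each.

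Away from $\Gamma$ the inverse already exists in $B$ itself. If $|z|\geqslant 1+\delta$ I would write $z-u=z(1-u/z)$ and invoke Lemma~\ref{inv}, since $|u/z|=1/|z|<1$; the inverse is bounded by $1/(|z|-1)\leqslant 1/\delta$. If $|z|\leqslant 1-\delta$ I would instead write $z-u=-u(1-z\delta_{-1})$ with $|z\delta_{-1}|=|z|<1$, again invertible by Lemma~\ref{inv} with inverse bounded by $1/\delta$. These bounds pass to $B/\ideal{f}$ unchanged and handle everything outside a thin annulus around $\Gamma$, including the behaviour as $|z|\to\infty$.

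The heart of the argument is the circle $|z|=1$, where one exploits $f$. Since $z\neq 0$, each $u^n-z^n$ is divisible by $u-z$ in $B$, so $f-f(z)=(u-z)\,q_z$ for a suitable $q_z$, and modulo $f$ this reads $(z-u)q_z\equiv f(z)$; as $|f(z)|\geqslant\varepsilon$ the scalar $f(z)$ is invertible, so $z-u$ would be invertible in $B/\ideal{f}$ with inverse $q_z/f(z)$. The hard part is the uniform bound: the naive estimate $|q_z|\leqslant\sum_n |n|\,|f_n|$ need not be finite for a general $f\in l^1(\ints)$. To repair this I would approximate $f$ by a trigonometric polynomial $p$ of finite support with $|f-p|<\varepsilon/4$. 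Then $|p(\lambda)|\geqslant 3\varepsilon/4$ on $\Gamma$, the quotient $q_z$ of $p-p(z)$ by $u-z$ is a finite sum with $|q_z|\leqslant\sum_n|n|\,|p_n|=:C$ independent of $z$, and, crucially, $p\equiv p-f$ has norm $<\varepsilon/4$ in $B/\ideal{f}$. Hence in the quotient $(z-u)q_z=p(z)-p=p(z)\bigl(1-p/p(z)\bigr)$ with $|p/p(z)|<1/2$, which is invertible by Lemma~\ref{inv}; this makes $z-u$ invertible with inverse bounded by some $M_0$ depending only on $C$ and $\varepsilon$, uniformly over $z\in\Gamma$.

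Finally I would use Lemma~\ref{open} to spread this bound to a full annulus: since each $z_0-u$ with $|z_0|=1$ has inverse bounded by $M_0$, every $z$ with $|z-z_0|\leqslant 1/(2M_0)$ has $z-u$ invertible with inverse bounded by $2M_0$, so choosing $\delta=1/(2M_0)$ closes the gap between the three regions and yields a single global bound valid for all $z\in\compls$. Theorem~\ref{mainC} then gives $1=0$ in $B/\ideal{f}$, which by the remark following it (via Lemma~\ref{inv}) forces $1\in\ideal{f}$, i.e.\ $f$ is invertible. I expect the polynomial approximation step to be the only real subtlety; everything else is bookkeeping of the geometric-series bounds from Lemmas~\ref{inv} and~\ref{open}.
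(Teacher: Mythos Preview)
Your proposal is correct and follows essentially the same route as the paper: work in $B/\ideal{f}$ with $u=\delta_1$, handle $|z|$ bounded away from $1$ by geometric series, approximate $f$ by a finite-support element on $\Gamma$ so that the difference quotient has controlled norm and the resulting factor $p(z)-p$ (equivalently $g(\lambda)-g$) is invertible in the quotient, then thicken to an annulus via Lemma~\ref{open} and patch the three regions. The only point where the paper is slightly more careful is the constructive case split, using overlapping alternatives $||\lambda|-1|\leqslant\alpha$ versus $||\lambda|-1|\geqslant\alpha/2$ rather than a sharp trichotomy; your choice $\delta=1/(2M_0)$ gives regions meeting only at their boundaries, so you should widen one of them slightly to make the covering constructively usable.
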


\begin{proof}
We let $A$ be the quotient $B/\langle f\rangle$. We show that $\lambda -u$ is
invertible for all $\lambda$ in $\compls$ and furthermore that the inverse is uniformly
bounded. We can then apply Theorem~\ref{mainC}.

 If $|\lambda|<1$, then $|\lambda u^{-1}|=|\lambda|\leqslant r$ for some $r<1$. Hence, $\sum_{n\geq
1}
(\lambda/u)^n$ exists and equals $(1-\lambda u^{-1})^{-1}=u(u-\lambda)^{-1}$. The inverse
$(u-\lambda)^{-1}$ is bounded by $|u^{-1}| (1-r)^{-1}=(1-r)^{-1}$.

Similarly, if $|\lambda|>1$, then $|u/\lambda|= |1/\lambda| \leqslant r$ for some $r<1$. Hence,
$\sum_{n\geq 1}
(u/\lambda)^n$ exists and equals $(1-(u/\lambda))^{-1}=u^{-1}(\lambda-u)^{-1}$. The inverse
$(\lambda-u)^{-1}$ is bounded by $|u| (1-r)^{-1}=(1-r)^{-1}$.

 Let $g$ be an element of finite support such that $|f-g|<\varepsilon'$. Then
\[|(g(\lambda)-g)- (f(\lambda)-f)|=|(g-f)(\lambda)+(f-g)|\leqslant 2\varepsilon'.\]
Since $f(\lambda)-f$ is equal to $f(\lambda)$ mod $f$, $g(\lambda)-g$ is invertible mod $f$.
By Lemma~\ref{open} we have a uniform bound $M$ on the inverse of $g(\lambda)-g$ in $\Gamma$. Since
$g(\lambda)-g$ is a polynomial in $u$, $\lambda -u$ divides it, say
$(\lambda -u)h=g(\lambda)-g$. So, $\lambda-u$ is invertible for all $\lambda$ in $\Gamma$ and the
inverse is bounded by $M|h|$. We assume that $M|h|\geqslant 1$. By Lemma~\ref{open}, $\lambda-u$ is
invertible for all $\lambda$ with $||\lambda|-1|\leqslant1/(2M|h|)$ and its inverse is bounded by
$|Mh|/(1-\frac12)=2|Mh|$.

Write $\alpha:=1/(2M|h|)$. Then either $||\lambda|-1|\leqslant\alpha$ or
$||\lambda|-1|\geq \alpha/2$.
In the latter case, either $|\lambda|\leqslant1-\alpha/2$ or $|\lambda|\geq 1+\alpha/2$.
We conclude that $\lambda-u$ is invertible for all $\lambda$ in $\compls$ and its inverse is bounded
by \[\sup \{2M|h|,(1-\frac\alpha2)^{-1},(1-(1+\frac\alpha2)^{-1})^{-1}\}.\]
\end{proof}

 Being constructive, this reasoning can be seen as an algorithm that computes
an inverse of $f$ in $B$.

\section{Application 2: Wiener's Tauberian Theorem\label{Tauberian}}

Let $C(\reals)$ be the space of continuous functions of compact support
and let $L = L^1(\reals)$ be the completion of this space
for the $L^1$ norm. If we define $T_x(g)(y) = g(x+y)$, for $g$ in $C(\reals)$,
we have $|T_x(g)-T_x(h)| = |g-h|$ and, by extension,
we have a continuous function $x\mapsto T_x(g),~\reals\rightarrow L^1(\reals)$
for any $g$ in $L$ such that $|T_x(g)| = |g|$ for all $x$.
For $f$ in $C(\reals)$ and $g$ in $L$, we define
$$
f*g = \int f(x)T_{-x}(g) \mathd x.
$$
It follows that we have $|f*g|\leqslant |f|_\infty|g|_1$. Hence the product $f*g$
can be
defined by extension for $f$ and $g$ both in $L$.
If both $f,g$ are in $C(\reals)$, so is $f*g$.

Since we have
$$(f*g)(y)=(g*f)(y) = \int f(x)g(y-x) \mathd x$$
for $f,g$ in $C(\reals)$ this holds also for $f,g$ in $L$ and
the algebra $L$ with convolution product is a commutative Banach algebra. (It
can be shown that it does not have any unit element.)

 If $g$ is a continuous function with compact support we define
$$\hat{g}(p) = \int g(x)e^{-ipx} \mathd x.$$
We then have $\hat{g}$ in $C_0(\reals)$, that is, $\hat{g}$ is continuous on
$\reals$ and tends to $0$ at infinity. This is first proved for $g\in C^1(\reals)$ by integration
by parts and then for
general $g\in C(\reals)$ by density.
Moreover, $|\hat{g}(p)|\leqslant
|g|$ for
all $p$. It follows that we can extend the map $g\mapsto \hat{g}$ and define
$\hat{g}$ in $C_0(\reals)$ for $g$ in $L$.

 We fix an element $h$ of $L$ such that $\hat{h}(p) = 1$ for all $p$ in
$[-M,M]$ and for all $\varepsilon>0$ there exists $\eta$ such that
$|\hat{h}(p)|\leqslant 1-\eta$ if $M+\varepsilon\leqslant|p|$.

\begin{theorem}\label{Tauberian1}
If $|\hat{f}(p)|>\varepsilon$ for all $p$ in $[-M,M]$, then $f$ divides any
element $g$ such that $g * h = g$ in $L$.
\end{theorem}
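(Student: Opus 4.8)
The plan is to reduce the divisibility statement to a single local inversion of $\hat{f}$ on $[-M,M]$ and then to run the Banach-algebra machinery of the previous sections. First I would extract the frequency information hidden in the hypothesis $g*h=g$. Taking transforms gives $\hat{g}\hat{h}=\hat{g}$, i.e. $\hat{g}(p)(1-\hat{h}(p))=0$ for all $p$. For every $p$ with $|p|>M$ we may take $\varepsilon=|p|-M$ and obtain $|\hat{h}(p)|\leqslant 1-\eta<1$, so the factor $1-\hat{h}(p)$ is nonzero there and hence $\hat{g}(p)=0$ for $|p|>M$; by continuity $\hat{g}$ is supported in $[-M,M]$.

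Next I would show that it suffices to produce a single $k$ in $L$ with $\widehat{f*k}(p)=1$ for all $p$ in $[-M,M]$. Indeed, put $q=k*g$, which lies in $L$. Then $\widehat{f*q}=\hat{f}\hat{k}\hat{g}$ agrees with $\hat{g}$ on $[-M,M]$, where $\hat{f}\hat{k}=1$, and vanishes together with $\hat{g}$ outside $[-M,M]$; hence $\widehat{f*q}=\hat{g}$ everywhere, and by uniqueness of the Fourier transform on $L$ we conclude $f*q=g$, exhibiting $q$ as the required quotient. This is the whole gain of working with the given $h$: it localizes $g$ in frequency to exactly the band on which $\hat{f}$ is controlled.

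It remains to construct $k$, and this carries all the work. On the compact interval $[-M,M]$ the continuous function $\hat{f}$ satisfies $|\hat{f}|\geqslant\varepsilon$, so $1/\hat{f}$ is well defined and continuous there, and the task is to realize its restriction as the transform of an honest $L^1$ function. I would do this in the (unital) algebra of restrictions to $[-M,M]$ of transforms of elements of $L$, whose unit is the constant $1=\hat{h}\!\restriction_{[-M,M]}$, following the method already used for $l^1(\ints)$: approximate $f$ in the $L^1$ norm by a function $f_0$ with a well-behaved (say rapidly decaying) transform, invert $\hat{f_0}$ on a neighbourhood of $[-M,M]$, and pass from $f_0$ to $f=f_0-(f_0-f)$ by the Neumann series of Lemma~\ref{open}, whose convergence is guaranteed once $|f-f_0|$ is small relative to $\varepsilon$ and the current bound on the inverse. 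The bounded-inverse principle of Theorem~\ref{mainC}, together with Lemma~\ref{inv}, is what replaces the classical passage through a maximal ideal and keeps the symbol inside the Wiener algebra, so that $k$ genuinely lands in $L$.

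The main obstacle is precisely this local inversion: producing an element of $L$ whose transform equals $1/\hat{f}$ on $[-M,M]$, with explicit control of the bound, despite $L$ having no unit and the frequency line $\reals$ being non-compact. The non-compactness is absorbed by localizing through $h$ (which confines everything to the band $[-M,M]$), and the absence of a unit by working with a bounded \emph{local} inverse rather than a global one, exactly as Lemma~\ref{open} and Theorem~\ref{mainC} were used in the Fourier-series application. Once $k$ is in hand, the short convolution identity of the second step finishes the proof.
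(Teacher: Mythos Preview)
Your reduction steps are sound: confining $\hat g$ to $[-M,M]$ and reducing to the construction of a single $k\in L$ with $\hat f\,\hat k=1$ on $[-M,M]$ is a perfectly valid reformulation, and the final convolution identity $f*(k*g)=g$ follows as you say. But your construction of $k$ is where the proposal breaks down, and it does not match what the paper actually does.

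The difficulty is the circularity in your Neumann step. You approximate $f$ by $f_0$, build an explicit local inverse $k_0$ of $f_0$ (via a cutoff on the Fourier side), and then want to pass to $f$ using Lemma~\ref{open}. For that you need $|f-f_0|_1\,|k_0|_1<1$. But $|k_0|_1$ depends on $f_0$: it is the $L^1$-norm of the inverse transform of $\phi/\hat f_0$, and there is no a priori bound on this quantity in terms of $\varepsilon$ and $M$ alone (it depends on derivatives of $\hat f_0$, hence on moments of $f_0$, which blow up as the approximation improves). So you cannot choose $|f-f_0|$ ``small relative to the current bound on the inverse'' without already knowing that bound, and the bound changes when you change $f_0$. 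This is precisely the obstacle the paper's machinery is built to get around; your appeal to Theorem~\ref{mainC} at this point is only nominal, since your argument never produces an element $u$ to which that theorem would apply.

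The paper takes a different route. It forms the quotient $B=L/I$ with $I=\{k*h-k\}$, in which $h$ is a genuine unit, and then uses Lemma~\ref{exp} on the one-parameter group $x\mapsto T_x(h)$ to produce a single element $u\in B$ with $T_x(h)=e^{ux}$. The whole problem is thereby reduced to a one-generator situation: one shows that $\lambda-u$ is invertible modulo $f$ with \emph{uniformly} bounded inverse, splitting into the cases $\lambda=ip$ with $|p|\leqslant M$ (use $\hat f(p)-f$), $\lambda=ip$ with $|p|\geqslant M+\delta$ (use $\hat h(p)-h$), and $\lambda$ off the imaginary axis (use $1-e^{(\lambda-u)x}$). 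Then Theorem~\ref{mainC} gives $1=0$ in $B/\ideal{f}$, i.e.\ $f*k=h$ modulo $I$, and $g*I=0$ yields $f*(k*g)=g$. The exponential lemma and the resulting generator $u$ are the missing idea in your sketch; without them there is no natural candidate for the element whose resolvent Theorem~\ref{mainC} controls.
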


\begin{proof}
We let $I$ be the ideal of all elements $k*h - k$ and $B$ be the Banach
algebra $L/I$. Notice that $B$ has $h$ as unit element.
We claim that $f$ is invertible in $B$.

 The map $\alpha:\reals\rightarrow B$ defined by $\alpha(x) = T_x(h)$ is
continuous
and satisfies $\alpha(0) = h$ and $\alpha(x+y) = \alpha(x) * \alpha(y)$. By
Lemma~\ref{exp} there exists an element $u$ in $B$ such that $T_x(h) =
e^{ux}$ for
all $x$ in $\reals$. In $B$ we have
\[g = g*h = \int g(x)T_{-x}(h) \mathd x = \int g(x)e^{-ux} \mathd x.\]
We show that $u-\lambda$ is invertible mod $f$ and of bounded inverse. We can then
apply Theorem~\ref{mainC} and deduce that $f$ is invertible in $B$.

We will consider three cases: 1.\ $\lambda=ip$ for $|p|\leqslant M$, 2.\ $\lambda=ip$ for
$|p|\geqslant M+\delta$ and $\delta\geqslant 0$, and 3.\ $\lambda=r+ip$ and $|r|>\delta>0$. By
continuity and
Lemma~\ref{open} these cases are sufficient.

1.\ We claim that $ip-u$ is invertible modulo $f$ if $|p|\leqslant M$. Indeed,
if we take $N$ such that
\[\left| \int_{-N}^N |f(x)|\mathd x - \int |f(x)|\mathd
x\right|\leqslant\frac\varepsilon{2(1+|h|)},\]
then
\begin{eqnarray*}
 \left| \int f(x)(e^{-ipx} - e^{-ux})\mathd x- \int_{-N}^N f(x)(e^{-ipx} -
e^{-ux})\mathd x\right|&\leqslant\\
\left| \int_{\reals\setminus[-N,N]} f(x)(e^{-ipx} - e^{-ux})\mathd x\right|&\leqslant\\
\left| \int_{\reals\setminus[-N,N]} f(x)\right| (1+|h|)\mathd
x&\leqslant&\frac\varepsilon2.\\
\end{eqnarray*}
Since,
\[\int f(x)(e^{-ipx} - e^{-ux})\mathd x = \hat{f}(p)-f.\]
this element is invertible mod $f$, by hypothesis its inverse is bounded by $1/\varepsilon$.
By Lemma~\ref{open}, $\int _{-N}^N f(x)(e^{-ipx} - e^{-ux})\mathd x$ is invertible mod $f$. Let
$g$ be its inverse. Then $|g|\leqslant 1/\varepsilon(1-\frac12)=2/\varepsilon$. Since this integral
is divisible by $ip-u$, it
remains to find an explicit bound for the inverse. We have $e^{-ipx} - e^{-ux}=e^{-ipx}(1 -
e^{(ip-u)x})$ and for all $r$, $r|(1-e^{rx})$ and $|\frac{1-e^{rx}}r|\leqslant 1+e^{|r|x}$, as a
simple
power series argument shows.
Consequently, the inverse of $ip-u$ is bounded by $\frac2\epsilon\int _{-N}^N
|f(x)|(1+e^{rx})\mathd x$, where $r>|ip-u|$.

2.\ We claim that $ip-u$ is invertible with bounded inverse for all $p$ such that
$|p|\geqslant M+\delta$.
Indeed, if we take
$\eta>0$ such that $|\hat{h}(p)|\leqslant 1-\eta$ and $N$ such that
\[\left|\int _{-N}^N |h(x)|\mathd x-\int |h(x)| \mathd x\right|\leqslant\eta/2(1+|h|).\]
then
\begin{eqnarray*}
\left|\int _{-N}^N h(x)(e^{-ipx} - e^{-ux})\mathd x - \int h(x)(e^{-ipx} - e^{-ux})\mathd
x\right|&\leqslant\\
\left| \int_{\reals\setminus[-N,N]} h(x)(e^{-ipx} - e^{-ux})\mathd x\right|&\leqslant\\
\left| \int_{\reals\setminus[-N,N]} h(x)\right| (1+|h|)\mathd x&\leqslant&\eta/2\\
\end{eqnarray*}
and
\[\left|\int h(x)(e^{-ipx} - e^{-ux})\mathd x\right|= \hat{h}(p)-h\]
the latter term is invertible since $h$ is the unit of $B$.
Hence $\int _{-N}^N h(x)(e^{-ipx} - e^{-ux})\mathd x$ is invertible with inverse bounded by
$2/\eta$. As before, this element is divisible by $ip-u$ and its inverse is bounded by
$\frac2\eta\int _{-N}^N |h(x)||1+e^{rx}|\mathd x$, where $r>|ip-u|$.

We conclude that $ip-u$ is invertible for all $p$ in $\reals$ and its inverse is bounded by
$d:=\frac2\epsilon\int (|h(x)|+|f(x)|)(1+e^{rx})\mathd x$, where $r>|ip-u|$.
Using Lemma~\ref{open} it follows that we can find $\delta>0$
such that $r+ip-u$ is invertible for all $p$ in $\reals$ and $r$ such that
$|r|<1/2d$, the inverse is bounded by $2d$.

3.\ For $\lambda=r+ip$, $\lambda -u$ divides

\begin{equation}
1-e^{(\lambda - u)x} = 1- e^{\lambda x} T_x(h) = (1-e^{-rx}e^{-ipx}T_x(h))\label{form}
\end{equation}

and $|T_x(h)|$ is bounded by $|h|$. For $x=\frac{\log(2|h|)}r$,
$|e^{-rx}e^{-ipx}T_x(h)|\leqslant e^{-rx}|h|= \frac12$, so the right hand side of
equation~\ref{form} is invertible with inverse bounded by $2e^{-r x}=2/(2|h|)=1/|h|$. Hence $\lambda
-u$ has an inverse with bound $1/|h|$ for $|r|>\varepsilon$.

We have finished the proof that for each $\lambda$, $u-\lambda$ is invertible mod $f$ and of bounded
inverse.

 It follows that we have $k$ such that $f*k = h$ mod $I$. If we assume $g * h =
g$
we have $g * I = 0$ and then $f * k * g = g$ so that $f$ divides $g$.
\end{proof}

\begin{proposition}
  {\emph{{\cite[VI.1.13]{Katznelson}}}} If $g$ in $L^1 (\reals)$ is such
  that $\hat{g}$ has compact support then there exists $h$ in $L^1(\reals)$
such
  that $h \ast g = g$. In fact, we can use a De la Vall\'ee Poussin's kernel
  for $h$.
\end{proposition}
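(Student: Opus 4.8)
The plan is to pass to the Fourier transform, turning the convolution equation into a pointwise one. In the conventions above one has $\widehat{h\ast g}=\hat h\,\hat g$, so it suffices to produce $h\in L^1(\reals)$ whose transform equals $1$ on the support of $\hat g$: then $\widehat{h\ast g}=\hat h\,\hat g=\hat g$, and $g$ and $h\ast g$ are both recovered as the (same) inverse transform of $\hat g$. Since $g\in L^1(\reals)$, the function $\hat g$ is continuous, and by hypothesis it vanishes outside some interval $[-M,M]$; in particular $\hat g$ itself lies in $L^1(\reals)$, so Fourier inversion will be available to conclude $h\ast g=g$ once $\hat h\equiv 1$ on $[-M,M]\supseteq\operatorname{supp}\hat g$.

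First I would fix the shape of $\hat h$. Let $T_a(p)=(1-|p|/a)_+$ denote the tent of half-width $a$, and set
\[ \hat h=2T_{2M}-T_M. \]
A direct check shows $\hat h(p)=1$ for $|p|\leqslant M$, that $\hat h$ decreases linearly to $0$ on $M\leqslant|p|\leqslant 2M$, and that $\hat h(p)=0$ for $|p|\geqslant 2M$; thus $\hat h$ is a continuous, compactly supported, trapezoidal function that is identically $1$ on $[-M,M]$.

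Next I would define $h$ as the inverse Fourier transform of this trapezoid. Since the inverse transform of $T_a$ is the nonnegative Fej\'er kernel $x\mapsto\tfrac{2}{\pi a x^2}\sin^2(ax/2)$ (of integral $1$), the function $h$ is the corresponding de la Vall\'ee Poussin kernel
\[ h(x)=\frac{2}{\pi M x^2}\bigl(\sin^2(Mx)-\sin^2(Mx/2)\bigr). \]
This $h$ is continuous (the bracket is $O(x^2)$ near the origin) and is $O(1/x^2)$ at infinity, hence $h\in L^1(\reals)$; by Fourier inversion its transform is indeed the trapezoid above, so $\hat h\equiv 1$ on $\operatorname{supp}\hat g$ as required.

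Finally, with $\hat h\,\hat g=\hat g$ and $\hat g\in L^1(\reals)$, the identity $h\ast g=g$ follows by inverting the transform of both sides. The step demanding the most care — and the one I expect to be the main obstacle in a constructive reading — is precisely this last inversion: rather than invoking injectivity of the Fourier transform, I would use that $\hat g$ is continuous with compact support (hence band-limited and integrable) and argue directly that both $g$ and $h\ast g$ coincide with $\tfrac1{2\pi}\int\hat g(p)e^{ipx}\mathd p$, reducing everything to the elementary estimate $|\hat k|\leqslant|k|$ and the convolution identity $\widehat{h\ast g}=\hat h\,\hat g$ already available.
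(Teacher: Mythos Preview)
The paper does not supply its own proof of this proposition; it simply quotes Katznelson and remarks that a De la Vall\'ee Poussin kernel works. Your argument is correct and is exactly the standard construction: the trapezoid $2T_{2M}-T_M$ is the Fourier transform of the De la Vall\'ee Poussin kernel $2K_{2M}-K_M$, and the Fej\'er-kernel identity $\hat K_\lambda(t)=\max(1-|t|/\lambda,0)$ you rely on is precisely the computation the paper writes out in the proof of the following Corollary. So your approach is in complete agreement with what the paper gestures at.

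The one place that deserves the care you flag is the final step from $\hat h\,\hat g=\hat g$ to $h\ast g=g$. Your plan---use that $\hat g$ is continuous with compact support, hence integrable, and recover both $g$ and $h\ast g$ from $\hat g$ via the inversion formula $\tfrac1{2\pi}\int\hat g(p)e^{ipx}\mathd p$---is the right constructive route and avoids an appeal to bare injectivity of the Fourier transform on $L^1$.
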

We deduce the following version of Wiener's Tauberian Theorem.

\begin{corollary}\label{ideal-dense}
If $f$ in $L$ is such that $\hat{f}$ never takes the value $0$, then every
function $g$ such that $\hat{g}$ is of compact support is in the ideal
generated by $f$. Consequently, this ideal is dense in $L$.
\end{corollary}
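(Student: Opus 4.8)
The plan is to read the Corollary as a single application of Theorem~\ref{Tauberian1}, using the preceding Proposition to manufacture the auxiliary unit $h$, and then to add a short density argument for the last sentence. So first I would fix $g$ with $\hat g$ supported in some interval $[-M,M]$. Applying the preceding Proposition to this $g$, I obtain $h$ in $L$ with $h * g = g$; taking $h$ to be a de la Vall\'ee Poussin kernel adapted to $M$, I may moreover assume that $\hat h(p) = 1$ on $[-M,M]$ and $|\hat h(p)| \leqslant 1 - \eta$ once $|p| \geqslant M + \varepsilon$, which are exactly the conditions placed on $h$ in the hypotheses of Theorem~\ref{Tauberian1}.

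The only substantial step, and the one I expect to be the real obstacle constructively, is to turn the hypothesis that $\hat f$ never vanishes into the uniform lower bound $|\hat f(p)| > \varepsilon$ on $[-M,M]$ required by Theorem~\ref{Tauberian1}. Classically this is immediate by compactness, but constructively ``$\hat f$ never takes the value $0$'' gives only the pointwise information $|\hat f(p)| > 0$. I would bridge the gap by a covering argument: for each $p \in [-M,M]$ choose a rational $r_p > 0$ and, by continuity of $\hat f \in C_0(\reals)$, a neighborhood of $p$ on which $|\hat f| > r_p$; these neighborhoods cover the compact interval $[-M,M]$, so by the fan theorem (Heine--Borel) finitely many already cover it, and $\varepsilon := \min_i r_{p_i} > 0$ does the job. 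With this $\varepsilon$ and the $h$ above, Theorem~\ref{Tauberian1} applies and produces $k$ in $L$ with $g = f * k$, so $g$ lies in the ideal generated by $f$.

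It remains to deduce density. Here I would use that the functions with compactly supported Fourier transform are dense in $L$: for an arbitrary $g$ in $L$ the convolutions $V * g$ with de la Vall\'ee Poussin kernels $V$ of increasing width have transforms $\hat V \hat g$ of compact support and converge to $g$ in the $L^1$ norm, since these kernels form an approximate identity. By the first part every such $V * g$ lies in the ideal generated by $f$, so that ideal contains a dense subset of $L$ and is therefore itself dense. The main delicacy throughout is the constructive extraction of the uniform bound $\varepsilon$; everything else is a bookkeeping combination of the Proposition, Theorem~\ref{Tauberian1}, and standard approximate-identity estimates.
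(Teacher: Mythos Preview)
Your proposal is correct and structurally matches the paper: obtain $h$ from the preceding Proposition, feed $g$, $h$ and the uniform lower bound on $\hat f$ into Theorem~\ref{Tauberian1}, and then argue density by approximating an arbitrary element of $L$ by functions with compactly supported Fourier transform. Two small points of divergence are worth noting. First, the paper does not attempt your fan-theorem extraction of the uniform $\varepsilon$; instead it simply declares, immediately after the Corollary, that constructively the hypothesis ``$\hat f$ never takes the value $0$'' is to be \emph{read} as ``$\hat f$ is bounded away from $0$ on every compact,'' so the uniform bound on $[-M,M]$ is part of the hypothesis rather than something to be proved. Your covering argument is fine classically but is exactly the step that Bishop-style mathematics avoids; the paper's reinterpretation is the cleaner constructive move. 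Second, for the density step the paper uses the Fej\'er kernels $K_\lambda$ (with $\widehat{K_\lambda}(t)=\max(1-|t|/\lambda,0)$) rather than de la Vall\'ee Poussin kernels; either approximate identity works, so this is only a cosmetic difference.
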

\begin{proof}
  Any function in $L^1 (\reals)$ is the limit of functions in $L^1
  (\reals)$ having their Fourier transform of compact
support~\cite[VI.1.12]{Katznelson}:

Define the Fej\'er kernel $K_\lambda(x)=\lambda K(\lambda x)$, where $K(x)=\frac1{2\pi}(\frac{\sin
x/2}{x/2})^2$.

By the inversion formula:
\[
f(x)=\frac1{2\pi}\int\hat{f}(t)e^{itx}\mathd t,
\]
$\hat{K_\lambda}(t)=\max(1-|t|/\lambda,0)$ and one can derive that
$(\widehat{K_\lambda*f})(t)=(1-|t|/\lambda)\hat{f}(t)$ for $|t|\leqslant\lambda$ and 0 otherwise.
\end{proof}

 Since $f * g = \int g(x)T_{-x}(f) \mathd x$ another way to state this Corollary
is
the following one.

\begin{corollary}
If $f$ in $L$ is such $\hat{f}$ never takes the value $0$ then the vector space
generated by the functions $T_x(f)$ is dense in $L$.
\end{corollary}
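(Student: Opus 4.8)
The plan is to deduce this from Corollary~\ref{ideal-dense}, using the displayed identity $f * g = \int g(x) T_{-x}(f)\,\mathd x$ to recognise every convolution $f * g$ as a Banach-space-valued integral of translates of $f$. By the theory of integration with values in a Banach space developed in Section~2, such an integral is the $L^1$-limit of its Riemann sums, and each Riemann sum $\sum_i g(x_i) T_{-x_i}(f)\,\Delta x_i$ is a finite linear combination of translates of $f$; since the family $\{T_{-x}(f)\}_{x\in\reals}$ coincides with $\{T_x(f)\}_{x\in\reals}$, every such sum lies in the vector space $V$ generated by the functions $T_x(f)$. Therefore $f * g$ lies in the closure $\overline{V}$.

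First I would make this precise by showing $f * k \in \overline{V}$ for every $k$ in $L$. For $k$ continuous with compact support the integrand $x \mapsto k(x) T_{-x}(f)$ is a continuous $L$-valued map of compact support --- here I use that $x \mapsto T_x(f)$ is continuous and $|T_x(f)| = |f|$ --- so its integral is approximated in norm by Riemann sums belonging to $V$, whence $f * k \in \overline{V}$. For general $k$ in $L$ I would approximate $k$ by continuous functions $k'$ of compact support and pass to the limit using $|f * (k - k')| \leqslant |f|\,|k - k'|$; since $\overline{V}$ is closed, $f * k \in \overline{V}$ as well.

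Second I would invoke Corollary~\ref{ideal-dense}: every $g$ whose Fourier transform $\hat{g}$ has compact support lies in the ideal generated by $f$, that is, is of the form $f * k$, and such $g$ are dense in $L$. By the previous step each such $g$ lies in $\overline{V}$, and density then forces $\overline{V} = L$, which is the assertion.

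The only substantive point is the first step: justifying that the vector-valued integral defining $f * k$ is genuinely the norm-limit of Riemann sums lying in $V$. This rests on the uniform continuity of $x \mapsto T_x(f)$ together with the Banach-valued integration of Section~2; once it is in place, the corollary is essentially a reformulation of Corollary~\ref{ideal-dense} via the identity $f * g = \int g(x) T_{-x}(f)\,\mathd x$.
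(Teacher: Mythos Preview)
Your proposal is correct and follows exactly the route the paper indicates: the paper simply records the identity $f * g = \int g(x)\,T_{-x}(f)\,\mathd x$ and declares the corollary to be ``another way to state'' Corollary~\ref{ideal-dense}. You have merely filled in the details the paper leaves implicit --- namely, that the Banach-space-valued integral is a norm-limit of Riemann sums lying in the span of translates, and that this passes to general $k\in L$ by density --- so the two arguments coincide.
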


Constructively, the hypothesis that $\hat{f}$ never takes the value $0$ should
be read as:
$\hat{f}$ is bounded away from $0$ on any compact.

\section{Conclusions and Future work}\label{conclusion}

To have a point-free description of the spectrum of a Banach algebra was
the goal of the work of de Bruijn and van der Meiden~\cite{deBruijn}. The
complexity of this description and the process of finding the proof
of the compactness of the spectrum is cited by de Bruijn as one inspiration for his
AUTOMATH project~{\cite{deBruijn,deBruijn:AUTOMATH}}.

 It would be interesting to have an actual implementation of our work, following already existing
work formalising basic analysis~\cite{Riemann}.

 The work of Krivine~\cite{Krivine} contains several examples similar to Wiener's
results, but in a `real' framework. One considers respectively $l^1(\nats)$
instead of $l^1(\ints)$ and $L^1(\reals^+)$ instead of $L^1(\reals)$. It might
be interesting to give a constructive interpretation these results using the
technique presented in~\cite{Coquand:Stone,StoneYosida}.

Constructive, and choice-free, results on Banach spaces can be interpreted as results on Banach
sheaves, or equivalently, Banach bundles~\cite{Banach-sheaves}. Similarly,
constructive results on Banach algebras can be interpreted as results on Banach
algebra bundles~\cite{Banach-alg-bundle}.

\bibliographystyle{alpha}\bibliography{B-alg}

\end{document}